 \newcommand{\beq}{\begin{equation}}
\newcommand{\eeq}{\end{equation}}
\newtheorem{theorem+}           {Theorem}      
\newtheorem{definition+} {Definition}
\newtheorem{lemma+}  {Lemma}
\newtheorem{corollary+}  {Corollary}
\newtheorem{proposition+} {Proposition}
\newtheorem{example+} {Example}
\newenvironment{theorem}{\begin{theorem+}\sl}{\end{theorem+}\rm}
\newenvironment{corollary}{\begin{corollary+}\sl}{\end{corollary+}\rm}
\newenvironment{proof}{\medbreak\noindent{\it Proof.}\rm}{\hfill$\square$\rm}
\renewcommand{\Bbb}{\mathbb}
\newcommand{\Z}{{\Bbb  Z}}
\newcommand{\Rn}{{ \Bbb R}^n}
\newcommand{\C}{{\Bbb  C}}
\newcommand{\Cn}{{\Bbb  C\sp n}}
\newcommand{\B}{{\Bbb  B}}
\newcommand{\I}{{\mathcal I}}
\renewcommand{\O}{{\mathcal O}}
\newcommand{\PSH}{{\operatorname{PSH}}}
\newcommand{\codim}{{\operatorname{codim}}}
\newcommand{\vph}{\varphi}
\newcommand{\vphy}{\varphi_y}
\begin{document}

\begin{center}
{\huge\bf Analyticity and propagation of plurisubharmonic
singularities}
\end{center}

\medskip
\begin{center}
{\Large\bf Alexander Rashkovskii}
\end{center}

\begin{abstract} A variant of Siu's analyticity theorem is proved for relative types
of plurisubharmonic functions. Some results on propagation of
plurisubharmonic singularities and maximality of pluricomplex Green
functions with analytic singularities are derived.
\end{abstract}

\section{Introduction}
Given a complex manifold $X$, let $\PSH(X)$ denote the class of all
plurisubharmonic functions on $X$ and $\PSH^-(X)$ its subclass of
all non-positive functions.

We will say that $u\in\PSH(X)$ has singularity at a point $\zeta\in
X$ if $u(\zeta)=-\infty$. A basic characteristic of the singularity
is its {\it Lelong number}
$$ \nu(u,\zeta)=
\liminf_{x\to \zeta}\frac{u(x)}{\log|\varsigma(x)|}=dd^cu\wedge
(dd^c\log|\varsigma(x)|)^{n-1}(\{\zeta\});$$ here $d=\partial +
\bar\partial$, $d^c= (\partial -\bar\partial)/2\pi i$, $n=\dim X$,
and $\varsigma$ are local coordinates on a neighbourhood of $\zeta$
with $\varsigma(\zeta)=0$.

A classical result due to Siu states that the the function $x\mapsto
\nu(u,x)$ is upper semicontinuous in the analytic Zariski topology;
this means that the set
$$S_c(u,X)=\{\zeta\in X:\nu(u,\zeta)\ge c\}$$
is an analytic variety of $X$ for any $u\in\PSH(X)$ and $c>0$. As a
consequence, for an arbitrary analytic variety $Z$, the value
$\nu(u,\zeta)$ is generically constant on $Z$, equal to
$\inf\{\nu(u,\zeta):\zeta\in Z\}$; it can be greater only on a
proper analytic subset of $Z$.

Siu's theorem was extended to directional Lelong numbers
$\nu(u,\zeta,a)$, $a\in\Rn_+$, by Kiselman \cite{Kis2}, and to
generalized (weighted) Lelong numbers $\nu(u,\vph)$ with respect to
exponentially H\"older continuous plurisubharmonic weights $\vph$ by
Demailly \cite{D1}. The analyticity theorems with respect to the
standard and directional Lelong numbers give important information
on asymptotic behaviour of plurisubharmonic functions near the
singularity points: for example, $u(x)\le c\log|\varsigma(x)|+O(1)$
as $x\to\zeta\in S_c(u,X)$. Relations between the weighted Lelong
numbers $\nu(u,\vph)$ and the asymptotic behaviour of $u$ are not
that direct.

In \cite{R7}, a notion of {\it relative type} $\sigma(u,\vph)$ of
$u$ with respect to a maximal plurisubharmonic weight $\vph$ was
introduced (see Section~2) and an analyticity theorem for the sets
$\{\zeta:\:\sigma(u,\vph_\zeta)\ge c\}=\{\zeta:\: u(x)\le
c\,\vph(x,\zeta) +O(1),\ x\to\zeta\}$ was proved, where
$\vph_\zeta(x)=\vph(x,\zeta)\in\PSH(X\times X)$ is such that
$\vph_\zeta^{-1}(-\infty)=\zeta$, $(dd^c\vph)^n=0$ on $\{x\neq
\zeta\}$, and $e^\vph$ is H\"older continuous with respect to
$\zeta$. The extra condition (comparing to Demailly's result) on
$(dd^c\vph)^n$ is quite essential. Take, for example, the function
$\vph(x,\zeta)=\max\{\log|x_1-\zeta_1|+\log(|x_1-\zeta_1|+|x_2|),
\log|x_2-\zeta_2|\}$ in $\C^2\times\C^2$; one has $\log|x_1|\le
\vph(x,\zeta)+O(1)$ precisely when
$\zeta\in\{(0,\zeta_2):\zeta_2\neq 0\}$ that is not an analytic
variety. The reason here is that the values of the weighted Lelong
numbers $\nu(u,\vph_\zeta)$ and relative types
$\sigma(u,\vph_\zeta)$ depend on the singularity of $\vph$ in
opposite ways: while any jump of the singularity of $\vph$ at a
particular point $\zeta$ just {\sl increases} the value of
$\nu(u,\vph_\zeta)$, it {\sl diminishes} the type
$\sigma(u,\vph_\zeta)$.

Here we present a more general analyticity result
(Theorem~\ref{theo:typean}) for the relative types. Its main feature
is that we allow the singularity sets $\vph_\zeta^{-1}(-\infty)$
consisting of several points, which makes it possible to apply the
result to weights generated by finite holomorphic mappings. Another
benefit is that the analyticity concerns a parameter space (as in
\cite[Th\'eor\`eme 4.14]{D1}), which can thus give additional
information on the asymptotic behaviour even at a fixed point (see,
for example, Corollary~\ref{cor:var}). We derive some results on
propagation of plurisubharmonic singularities
(Corollary~\ref{cor:finite} and Theorem~\ref{theo:integr}), which in
turn imply certain global maximality properties of pluricomplex
Green functions with non-isolated analytic singularities
(Corollary~\ref{cor:gr}).

\section{Preliminaries}
Throughout the note, the following notions will be used.

A function $u\in\PSH(X)$ is said to be {\it maximal} on an open set
$U\Subset X$ if for any $v\in\PSH(X)$ the condition $v\le u$ on
$X\setminus U$ implies $v\le u$ on the whole $X$. A locally bounded
$u$ is maximal on $U$ if and only if $(dd^cu)^n=0$ there, $n=\dim
X$.

Given a Stein manifold $X$, let us have a finite set ${\mathcal
Z}=\{\zeta_1,\ldots,\zeta_k\}\subset X$ and functions
$\varphi_1,\ldots,\varphi_k$ such that $\varphi_j$ is
plurisubharmonic near $\zeta_j$, locally bounded  and maximal on a
punctured neighbourhood of $\zeta_j$, and
$\varphi_j(\zeta_j)=-\infty$. The function
$$G_{A,\{\varphi_j\}}(z)=\sup\,\{u(z):\: u\in PSH^-(X),\ u\le \varphi_j\
{\rm near\ }\zeta_j,\ 1\le j\le k\}$$ is the {\it Green--Zahariuta
function} of $X$ with the singularity $\varphi=\{\varphi_j\}$. The
notion was introduced, for the continuous weights $\varphi_j$, in
\cite{Za0}, see also \cite{Za}; the general case was treated in
\cite{R7}. The function $G_{A,\varphi}$ is plurisubharmonic in $X$,
maximal on $X\setminus{\mathcal Z}$ and satisfies
$G_{A,\varphi}(x)=\varphi_j(x)+O(1)$ as $x\to\zeta_j$.

Let $\vph\in PSH(X)$ be locally bounded on $X\setminus{\mathcal Z}$
and such that its restriction to a neighbourhood of each point
$\zeta_j$ is a maximal weight equivalent to $\varphi_j$ in the sense
$\lim\vph_j(x)/\vph(x)=1$; for example, one can take
$\vph=G_{A,\{\varphi_j\}}$. The {\it relative type} $\sigma(u,\vph)$
of $u$ with respect to $\vph$ was introduced in \cite{R7} as
$$\sigma(u,\vph)=\liminf_{\vph(x)\to-\infty}\frac{u(x)}{\vph(x)}.$$
In other words,
$$\sigma(u,\vph_y)=
\lim_{r\to-\infty}r^{-1}\Lambda(u,\vph,r),$$ where
$\Lambda(u,\vph,r):=\sup\{u(x):\: \vph(x)<r\}$.

\section{Analyticity theorem}
Let now $X$ be a Stein manifold of dimension $n$ and $Y$ be a
complex manifold of dimension $m$. Let $R:Y\to (-\infty,\infty]$ be
a lower semicontinuous function on $Y$. We consider a continuous
plurisubharmonic function $\vph:X\times Y\to [-\infty,\infty)$ such
that:
\begin{enumerate} \item[(i)] $\vph(x,y)<R(y)$ on $X\times Y$;
\item[(ii)] the set ${\mathcal Z}(y)=\{x:\vph(x,y)=-\infty\}$ is
finite for every $y\in Y$; \item[(iii)] for any $y_0\in Y$ and
$r<R(y_0)$ there exists a neighbourhood $U$ of $y_0$ such that the
set $\{(x,y):\: \vph(x,y)<r,\: y\in U\}\Subset X\times Y$;
\item[(iv)] $(dd^c\vph)^n=0$ on $\{ \vph(x,y)>-\infty\}$;
\item[(v)] $e^{\vph(x,y)}$ is locally H\"older continuous in $y$:
every point $(x_0,y_0)\in X\times Y$ has a neighbourhood $\omega$
such that \begin{equation}\label{eq:holder}
|e^{\vph(x,y_1)}-e^{\vph(x,y_2)}|\le
M|\varsigma(y_1)-\varsigma(y_2)|^\beta, \quad (x,y_j)\in\omega,
\end{equation} for some $M,\,\beta>0$ and suitable coordinates
$\varsigma$ on $Y$.
\end{enumerate}

The function $\vph_y(x)=\vph(x,y)$ is a maximal plurisubharmonic
weight with poles at ${\mathcal Z}(y)$; we will write this as
$\vph_y(x)\in MW_{{\mathcal Z}(y)}$. In particular, given $u\in
PSH(X)$, the function
$$r\mapsto \Lambda(u,\vph_y,r):=\sup\{u(x):\: \vph_y(x)<r\}$$
is convex and there exists the limit
$$\sigma(u,\vph_y)=
\lim_{r\to-\infty}r^{-1}\Lambda(u,\vph_y,r) =\liminf_{x\to{\mathcal
Z}(y)}\frac{u(x)}{\vph(x,y)},$$ the relative type of $u$ with
respect to the weight $\vph_y$. We have thus
\begin{equation}\label{eq:mainb} u(x)\le
\sigma(u,\vph_y)\vph(x,y)+O(1),\quad x\to {\mathcal
Z}(y).\end{equation}

Denote $$S_c(u,\vph,Y)=\{y\in Y:\: u(x)\le c\vph(x,y) +O(1)\ {\rm
as\ }x\to{\mathcal Z}(y)\}.$$ Equivalently, $S_c(u,\vph,Y)=\{y\in
Y:\: \sigma(u,\vphy)\ge c\}$.

\begin{theorem}\label{theo:typean}
Let a continuous function $\vph\in PSH(X\times Y)$ satisfy the above
conditions (i)--(v). Then for every $u\in PSH(X)$ and $c>0$, the set
$S_c(u,\vph,Y)$ is an analytic variety.
\end{theorem}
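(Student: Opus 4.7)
The argument is local on $Y$, so I fix $y_0\in Y$ and aim to prove that $S_c(u,\vph,Y)\cap V$ is analytic for a sufficiently small neighborhood $V\ni y_0$. Condition (iii) permits shrinking $V$ so that $\bigcup_{y\in V}\mathcal{Z}(y)$ is relatively compact in $X$; the continuity of $\vph$ combined with the H\"older bound (v) then forces every pole of every $\vph_y$, $y\in V$, to lie in one of preassigned disjoint neighborhoods $U_1,\dots,U_k$ of the points $\zeta_1^0,\dots,\zeta_k^0$ of $\mathcal{Z}(y_0)$. Because $\sigma(u,\vph_y)$ is a liminf as $x$ approaches the entire finite set $\mathcal{Z}(y)$, it decomposes as
\[
\sigma(u,\vph_y)=\min_{1\le j\le k}\sigma_j(u,\vph_y),\qquad
\sigma_j(u,\vph_y):=\liminf_{x\to\mathcal{Z}(y)\cap U_j}\frac{u(x)}{\vph(x,y)},
\]
so $S_c(u,\vph,Y)\cap V=\bigcap_j\{y\in V:\sigma_j(u,\vph_y)\ge c\}$ and it suffices to show each local set is analytic. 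This reduces matters to the situation in which $\vph_{y_0}$ has a single pole in $U_j$, though $\vph_y$ for $y\ne y_0$ may still have several poles clustering near it.

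For each local piece I would follow the strategy used in \cite{R7} and in Demailly's \cite[Th\'eor\`eme~4.14]{D1}. First, the maximality (iv) together with the comparison principle for maximal weights converts the pointwise asymptotic inequality $u\le c\vph_y+O(1)$ near the cluster $\mathcal{Z}(y)\cap U_j$ into a uniform sublevel-set bound
\[
\Lambda_j(u,\vph_y,r):=\sup\{u(x):x\in U_j,\ \vph(x,y)<r\}\le cr+C(y),\qquad r\to-\infty,
\]
so the condition $\sigma_j(u,\vph_y)\ge c$ is really a statement about the sublevel sets $\{\vph_y<r\}$ and not merely about approach sequences. Second, one builds an auxiliary plurisubharmonic function $\Psi_c$ on $U_j\times V$ --- for instance of the form $\Psi_c(x,y)=\max\{u(x),c\vph(x,y)-M\}$, or a refined variant --- whose Lelong number at $(\zeta_j^0,y)$ is controlled by $\sigma_j(u,\vph_y)$ in such a way that $\{y:\sigma_j(u,\vph_y)\ge c\}$ coincides with the corresponding upper level set for $\Psi_c$. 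Third, apply Siu's analyticity theorem --- or Demailly's generalization to weighted Lelong numbers --- to $\Psi_c$ on $U_j\times V$ and take the slice at $\{\zeta_j^0\}\times V$ to conclude that $\{y\in V:\sigma_j(u,\vph_y)\ge c\}$ is analytic.

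The chief obstacle is the second step: exhibiting $\Psi_c$ and verifying that its Lelong invariants on $V$ genuinely encode $\sigma_j(u,\vph_y)$. Both (iv) and (v) are indispensable here: (iv) is what propagates pointwise asymptotic information along $\mathcal{Z}(y)\cap U_j$ into the uniform sublevel bound needed to construct $\Psi_c$, and (v) is what makes the construction plurisubharmonic in $y$, so that Siu's theorem can be brought to bear on the parameter space. That (iv) cannot be dispensed with, even in the single-pole case, is exactly what the introductory example demonstrates: there $(dd^c\vph)^n\ne 0$ off the poles and the resulting locus $\{\zeta:\sigma(\log|x_1|,\vph_\zeta)\ge 1\}=\{(0,\zeta_2):\zeta_2\ne 0\}$ is not analytic.
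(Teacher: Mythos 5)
Your outline runs parallel to, but does not reproduce, a working argument, and the step you flag yourself as the ``chief obstacle'' is in fact where it breaks down. The construction you propose, $\Psi_c(x,y)=\max\{u(x),\,c\vph(x,y)-M\}$, does not have the encoding property you need. The Lelong number of a maximum is the minimum of the Lelong numbers, so $\nu(\Psi_c,(\zeta,y))=\min\{\nu(u,\zeta),\,c\,\nu(\vph,(\zeta,y))\}$, which compares the two mass densities rather than the ratio $u/\vph_y$. Concretely, take $n=m=1$, $u(x)=\log|x|$, $\vph(x,y)=2\log|x-y|$: then $\sigma(u,\vph_0)=1/2$, so $S_c=\{0\}$ for $c\le 1/2$ and $S_c=\emptyset$ for $c>1/2$; yet $\nu(\Psi_c,(0,0))=\min\{1,2c\}>0$ for every $c>0$, so slicing the Lelong upper-level sets of $\Psi_c$ at $x=0$ cannot see the threshold $c=1/2$. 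No choice of $\max$-type auxiliary function built from $u$ and $c\vph$ will detect whether a ratio-liminf is $\ge c$; relative types are Legendre-dual to Lelong numbers, not Lelong numbers of a $\max$.

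The paper circumvents this by working with the quantity that \emph{is} a Legendre transform. Using Demailly's theorem that $\Lambda(u,\vph_y,\mathrm{Re}\,\xi)$ is plurisubharmonic jointly in $(y,\xi)$ (this is where (iv) enters --- not in ``propagating pointwise information into a sublevel bound,'' which is automatic from the definition of $\Lambda$), one applies Kiselman's minimum principle to form $U_a(y)=\inf_{r<R_0}\{\Lambda(u,\vph_y,r)-a(r-R_0)\}$, a plurisubharmonic function that is finite exactly when $\sigma(u,\vph_y)<a$ and, by the H\"older condition (v), satisfies $U_a(z)\le(\sigma(u,\vph_y)-a)\beta\log|\varsigma(z)|+C_1$ near points $y$ with $\sigma(u,\vph_y)>a$. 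The sets $S_c$ are then recovered as intersections of the non-integrability loci $Z_{a,b}$ of $\exp(-b^{-1}U_a)$, which are analytic by H\"ormander--Bombieri--Skoda. This bypasses Siu's theorem entirely (and avoids the circularity of invoking Siu-type analyticity to prove a Siu-type analyticity statement).

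Two smaller points. First, the cluster decomposition $\sigma=\min_j\sigma_j$ is correct but unnecessary: the paper's argument handles finite $\mathcal Z(y)$ directly and never localizes over poles, and your reduction to a single cluster would require re-verifying (iii) for $\vph$ restricted to $U_j\times V$. Second, the role you assign to (iv) is misplaced; the content of (iv) is precisely what makes Demailly's plurisubharmonicity theorem for $\Lambda$ applicable, and without it the Kiselman minimum principle step collapses --- which is the actual mechanism behind the non-analyticity in the introductory example.
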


\begin{proof} We will follow the lines of the proof of
\cite[Theorem~7.1]{R7}, which in turn is an adaptation of Kiselman's
and Demailly's proofs of the corresponding variants of Siu's
theorem. Note that although the proof is quite short, it is based on
such deep results as Demailly's theorem on plurisubharmonicity of
the function $\Lambda(u,\vph_y,r)$ and the Bombieri--H\"ormander
theorem.

By \cite[Theorem~6.11]{D3}, the function $\Lambda(u,\vphy,{\rm
Re}\,\xi)$ is plurisubharmonic on the set $\{(y,\xi)\in Y\times\C:\:
{\rm Re}\,\xi<R(y)\}$. Fix a pseudoconvex domain $D\Subset Y$ and
denote $R_0=\inf\,\{R(y):\: y\in D\}>-\infty$. Given $a>0$, the
function $$(u,\xi)\mapsto\Lambda(u,\vphy,{\rm Re}\,\xi)-a\,{\rm
Re}\,\xi$$ is thus plurisubharmonic in $D\times\{{\rm
Re}\,\xi<R_0\}$ and independent of ${\rm Im}\,\xi$, so by Kiselman's
minimum principle \cite{Kis8}, the function $$
U_a(y)=\inf\{\Lambda(u,\vphy,r)-a(r-R_0):\:r<R_0\}
$$ is plurisubharmonic in $D$.

Let $y\in D$. If $a>\sigma(u,\vphy)$, then $\Lambda(u,\vphy,r)>
a(r-R_0)$ for all $r\le r_0<R_0$. If $r_0<r<R_0$, then
$\Lambda(u,\vphy,r)- a(r-R_0)> \Lambda(u,\vph_y,r_0)$. Therefore
$U_a(y)>-\infty$.

Now let $a<\sigma(u,\vphy)$. In view of property (iii) and estimate
(\ref{eq:mainb}), the exponential H\"older continuity
(\ref{eq:holder}) implies the bound
$$\Lambda(u,\vph_z,r)\le
\Lambda(u,\vphy,\log(e^r+M|\varsigma(z)|^\beta))\le
\sigma(u,\vphy)\log(e^r+M|\varsigma(z)|^\beta)+C$$ in a
neighbourhood $U_y$ of $y$ with the coordinates $\varsigma$ chosen
so that $\varsigma(y)=0$. Denote $r_z=\beta\log|\varsigma(z)|$, then
\begin{equation}\label{eq:holb1} U_a(z)\le \Lambda(u,\vph_z,r_z)- ar_z
\le (\sigma(u,\vphy)-a)\beta\log|\varsigma(z)|+C_1, \quad z\in
U_y.\end{equation}

Given $a,b>0$, let $Z_{a,b}$  be the set of points $y\in D$ such
that the function $\exp(-b^{-1}U_a)$ is not integrable near $y$. As
follows from the H\"ormander--Bombieri--Skoda theorem
\cite[Theorem~4.4.4]{Ho}, all the sets $Z_{a,b}$ are analytic.

If $y\not\in S_c(u,\vph,D)$ and $\sigma(u,\vphy)<a<c$, then
$U_a(y)>-\infty$ and so, by Skoda's theorem
\cite[Theorem~4.4.5]{Ho}, $y\not\in Z_{a,b}$ for all  $b>0$.

If $y\in S_c(u,\vph,D)$, $a<c$, and $b< (c-a)\beta (2m)^{-1}$, then
(\ref{eq:holb1}) implies $y\in Z_{a,b}$. Thus, $S_c(u,\vph,D)$
coincides with the intersection of all the sets $Z_{a,b}$ with $a<c$
and $b<(c-a)\beta (2m)^{-1}$, and is therefore analytic.
\end{proof}

\section{Dependence on coordinates}
By a classical result (again due to Siu), standard Lelong numbers
are independent of the choice of coordinates. The following
statement can be viewed as a bridge between Siu's analyticity and
invariance theorems.

\begin{corollary}\label{cor:var} Let $\vph\in MW_0$ satisfy $|e^{\vph(a)}
-e^{\vph(b)}|\le M|a-b|^\beta$, $\beta>0$, on a pseudoconvex
neighbourhood $X$ of $0\in \Cn$, and let $Y$ be a complex manifold
in $GL_n(\C)$. Then for every $u\in PSH(X)$, the sets
$$\{(\zeta,A)\in X\times Y: u(x)\le \vph(Ax-\zeta)+O(1)\ {\rm as\
}x\to A^{-1}\zeta\}$$ and
$$\{(\zeta,A)\in X\times Y: u(x)\le \vph(A(x-\zeta))+O(1)\ {\rm as\
}x\to \zeta\}$$ are analytic varieties in $X\times Y$. In
particular, the set $$S(u,\vph,Y)=\{A\in Y: u(x)\le \vph(Ax)+O(1)\
{\rm as\ } x\to 0\}$$ is analytic in $Y$. The functional
$u\mapsto\sigma(u,\vph_A)$, where $\vph_A(x)=\vph(Ax)$, is
independent of $A\in GL_n(\C)$ if and only if
$\vph(x)=c\log|x|+O(1)$ for some constant $c>0$.
\end{corollary}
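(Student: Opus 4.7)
The plan is to obtain the two analyticity statements and the assertion about $S(u,\vph,Y)$ as direct applications of Theorem~\ref{theo:typean}, and then to handle the functional characterisation separately. For the first set I take the parameter space $X\times Y$ and the weight $\Phi_1(x,(\zeta,A))=\vph(Ax-\zeta)$. The map $F(x,\zeta,A)=Ax-\zeta$ is holomorphic and submersive onto $\Cn$, so $\Phi_1=\vph\circ F$ has singular fibre $\{A^{-1}\zeta\}$, a single point (giving (ii) and, after shrinking, (iii)); the power $(dd^c\Phi_1)^n=F^*((dd^c\vph)^n)$ is concentrated on $F^{-1}(0)$, which gives (iv); and (v) follows from the H\"older bound on $e^\vph$ together with the Lipschitz dependence of $F$ on $(\zeta,A)$. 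The analogous verification with $\Phi_2(x,(\zeta,A))=\vph(A(x-\zeta))$ gives the second set, and $S(u,\vph,Y)$ is the slice of the first analytic set at $\zeta=0$ (using $A^{-1}\cdot 0=0$), hence analytic.

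For the equivalence, the ``if'' direction is immediate from $\|A^{-1}\|^{-1}|x|\le|Ax|\le\|A\|\,|x|$, which yields $\log|Ax|=\log|x|+O_A(1)$ and hence $\vph_A-\vph$ bounded near $0$, so $\sigma(u,\vph_A)=\sigma(u,\vph)$ for every $u$. For the ``only if'' direction, I apply the hypothesis with $u=\vph_B$ for arbitrary $B\in GL_n(\C)$: the function $A\mapsto\sigma(\vph_B,\vph_A)$ is constant and equals $1$ at $A=B$, so $\sigma(\vph_B,\vph_A)=1$ for all $A$, which is equivalent to $\vph(Bx)\le\vph(Ax)+O(1)$ as $x\to 0$. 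Swapping $A,B$ and taking $B=I$ yields $\vph(Ax)=\vph(x)+O_A(1)$ for every $A\in GL_n(\C)$.

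The remaining step, to conclude $\vph(x)=c\log|x|+O(1)$ from this $GL_n$-quasi-invariance, is where I expect the main obstacle. My plan is to proceed in two stages. First, restricting to $A=U\in U(n)$, use that the quantity $U\mapsto\sup_{|y|<\delta}|\vph(Uy)-\vph(y)|$ is lower semicontinuous (by continuity of $\vph$ on $X\setminus\{0\}$) and pointwise finite; Baire category combined with the compactness of $U(n)$ then promotes this to a uniform bound $|\vph(Uy)-\vph(y)|\le K$ for all $U\in U(n)$ and $|y|<\delta$. This shows $\vph(y)-\bar\vph(|y|)$ is bounded, where $\bar\vph(r)=\max_{|z|=r}\vph(z)$ is a radial continuous plurisubharmonic function whose Lelong number at $0$ coincides with $c=\nu(\vph,0)$. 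Second, the same Baire argument applied to the scalings $A=tI$ with $t$ in compact subsets of $(0,\infty)$ gives uniform control $|\bar\vph(tr)-\bar\vph(r)|\le K(t)$; writing $\bar\vph(r)=g(\log r)$ with $g$ convex increasing, the explicit Monge--Amp\`ere formula $(dd^c g(\log|y|))^n\propto g'(t)^{n-1}g''(t)$ on $\Cn\setminus\{0\}$ shows that any radial maximal weight at the origin must satisfy $g''\equiv 0$, hence $g$ is affine and $\bar\vph(|y|)=c\log|y|+\mathrm{const}$, which gives the desired bound for $\vph$. The truly delicate point is to show that maximality of $\vph$ transfers to the radial envelope $\bar\vph(|y|)$---bounded perturbations do not in general preserve the vanishing of the top Monge--Amp\`ere operator---and this is where I anticipate having to invoke the comparison principle for Monge--Amp\`ere masses together with the full $GL_n$-quasi-invariance rather than only its unitary and scalar subgroups.
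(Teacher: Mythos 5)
Your treatment of the analyticity assertions is essentially the paper's: both of you simply verify conditions (i)--(v) for the weights $\Phi_1(x,(\zeta,A))=\vph(Ax-\zeta)$, $\Phi_2(x,(\zeta,A))=\vph(A(x-\zeta))$ and invoke Theorem~\ref{theo:typean}; the paper does this without comment, and your verification (submersivity of $F$, pullback of $(dd^c\vph)^n$, H\"older dependence) fills in what the paper leaves implicit. Your derivation of $\vph(Ax)=\vph(x)+O_A(1)$ for all $A\in GL_n(\C)$ from the constancy of $\sigma(\cdot,\vph_A)$, by testing against $u=\vph_B$ and symmetrizing, is also exactly the right preliminary step.

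The genuine gap is the one you flag yourself: you want to conclude $\vph=c\log|x|+O(1)$ by passing to the radial envelope $\bar\vph(|y|)=\max_{|z|=|y|}\vph(z)$ and arguing that it must be a radial maximal weight, hence affine in $\log|y|$. But maximality of $\vph$ does \emph{not} transfer to $\bar\vph$: the equation $(dd^c\,\cdot\,)^n=0$ is not stable under $O(1)$-perturbations or under taking pointwise suprema over a group orbit, and your Baire-category estimate only controls the size of the perturbation, not its effect on the Monge--Amp\`ere mass. You anticipate invoking a comparison principle and the full $GL_n$-quasi-invariance, but you do not exhibit a mechanism that actually forces $\bar\vph$ to be maximal, so as written the argument does not close.

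The paper sidesteps this entirely with a small but decisive change of auxiliary object: instead of the sup-envelope $\bar\vph$, take the Green--Zahariuta function $G_\vph$ for the singularity $\vph$ in the unit ball $\B$. For a unitary $A$, the function $G_\vph\circ A$ is plurisubharmonic and nonpositive on $\B$, maximal off $0$, has the same boundary values, and has singularity $\vph\circ A+O(1)=\vph+O(1)$ at $0$; by the uniqueness of Green--Zahariuta functions this forces $G_\vph\circ A=G_\vph$ \emph{exactly}, so $G_\vph(x)=\chi(\log|x|)$ with $\chi$ convex increasing. Here maximality off $0$ is built into $G_\vph$ by construction, so $(dd^cG_\vph)^n=0$ gives $\chi''=0$ with no perturbation issue, and the boundary condition gives $\chi(t)=ct$; finally $\vph=G_\vph+O(1)=c\log|x|+O(1)$. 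In short, the paper replaces ``bounded perturbation of $\vph$'' by ``the canonical maximal representative of the singularity class of $\vph$,'' which is both exactly unitarily invariant and exactly maximal, and that is what makes the argument go through. Your plan can be repaired by making exactly this substitution.
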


\begin{proof} The analyticity follows directly from
Theorem~\ref{theo:typean}. To prove the last assertion, consider the
Green--Zahariuta function $G_\vph$ for the singularity $\vph$ in the
unit ball $\mathbb B$. Since $\vph(x)=\vph(Ax)+O(1)$ for any unitary
$A$, we have $G_\vph(x)=\chi(\log|x|)$, where $\chi$ is a convex
increasing function on $(-\infty,0)$. The equation
$(dd^cG_\vph)^n=0$ outside $0$ implies $\chi''=0$, and the condition
$G_\vph=0$ on $\partial \mathbb B$ gives then $\chi(t)=c\,t$, $c>0$.
\end{proof}
\medskip

{\it Remark}. For the case $\vph(x)=\max_k \log|x_k|^{a_k}$ and
$Y=GL_n(\C)$, similar analyticity theorems were proved in \cite{D}
and \cite{Kis3}.

\section{Analytic singularities}
Let $F:X\times Y\to \C^{n}$ be a holomorphic mapping such that its
zero set $|Z_F|$ is of codimension $n$ and moreover,
$|Z_F|\cap\{(x,y_0):x\in X\}$ is finite for any $y_0\in Y$. Then the
function $\vph(x,y)=\log|F(x,y)|$ satisfies conditions (i)--(v) on
$X'\times Y$ for any domain $X'\Subset X$; condition (iv) follows
from King's formula $(dd^c\log|F|)^{n}=[Z_F]$. This observation can
be used in finding analytic majorants for plurisubharmonic
singularities.

\begin{corollary}\label{cor:finite} Let $f=(f',f'')$ be a finite equidimensional
holomorphic mapping on a complex manifold $X$. If $u\in \PSH(X)$
satisfies $u\le\log|f'|+O(1)$ on an open set $\omega\subset X$
intersecting every irreducible component of the zero set of $f'$,
then $u\le\log|f'|+O(1)$ locally on $X$.
\end{corollary}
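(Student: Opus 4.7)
The plan is to apply Theorem~\ref{theo:typean} to the family
\[
\varphi(x,y)=\log|(f'(x),\,f''(x)-y)|,\qquad (x,y)\in V\times U,
\]
where $V\Subset X$ is a relatively compact Stein open subset and $U\subset\C^{n-k}$ is a connected open set (here $k$ is the number of coordinates of $f'$). With $F(x,y)=(f'(x),f''(x)-y)$, the observation opening Section~5 shows that conditions (i)--(v) are satisfied: the finiteness of $f$ makes the pole sets $\mathcal Z(y)=f^{-1}(0,y)\cap V$ finite, King's formula gives (iv), and the Lipschitz dependence of $e^\varphi$ on $y$ gives (v).

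It is enough to prove the bound in a neighborhood of an arbitrary $x_0\in X$. The case $x_0\notin Z_{f'}$ is trivial. If $x_0\in Z_{f'}$, let $Z_0$ be an irreducible component of $Z_{f'}$ through $x_0$. Since $f|_{Z_0}$ is finite, one can pick $z_0\in\omega\cap Z_0$ with $f(z_0)\neq f(x_0)$. One then carefully chooses a connected Stein neighborhood $V\Subset X$ of both $x_0$ and $z_0$ (for instance a thin neighborhood of a path in $Z_0$ from $x_0$ to $z_0$) so that $V\cap f^{-1}(f(x_0))=\{x_0\}$, the $z_0$-part of $V$ lies inside $\omega$, and the other preimages of $f(z_0)$ are avoided; and a connected open $U\subset\C^{n-k}$ containing $f''(x_0)$ and $f''(z_0)$, chosen small enough that $\{0\}\times U\subset f(V)$.

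The key step is to show that the analytic set $S_1(u,\varphi,U)\subset U$ given by Theorem~\ref{theo:typean} coincides with $U$. For parameters $y$ in a small neighborhood of $f''(z_0)$, the pole set $f^{-1}(0,y)\cap V$ is forced into the $z_0$-portion of $V$ (hence into $\omega$) by the branched-cover structure of $f|_V$; combined with $|f'(x)|\le|(f'(x),f''(x)-y)|$, the hypothesis $u\le\log|f'|+O(1)$ on $\omega$ yields $u(x)\le\log|(f'(x),f''(x)-y)|+O(1)$ near every pole, so $y\in S_1$. Thus $S_1$ contains an open subset of $U$, and by analyticity together with connectedness of $U$ one gets $S_1=U$. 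In particular $f''(\tilde x)\in S_1$ for every $\tilde x$ close enough to $x_0$.

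Specializing the relative-type inequality at $y=f''(\tilde x)$, the convexity of $r\mapsto\Lambda(u,\varphi_y,r)$ gives $u(x)\le\varphi_y(x)+C(y)$ with $C(y)=\Lambda(u,\varphi_y,r_0)-r_0$ on the sublevel set $\{\varphi_y<r_0\}$, which contains $\tilde x$ once $\tilde x$ is close to $x_0$. Evaluating at $x=\tilde x$ gives $u(\tilde x)\le\log|f'(\tilde x)|+C(f''(\tilde x))$. By Demailly's Theorem~6.11 (as invoked in the proof of Theorem~\ref{theo:typean}), $y\mapsto\Lambda(u,\varphi_y,r_0)$ is plurisubharmonic, so $C$ is locally bounded above; the desired local bound $u\le\log|f'|+O(1)$ near $x_0$ follows. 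The principal obstacle is the geometric construction of $V$ and $U$: one must simultaneously ensure that generic $y$'s near $f''(z_0)$ have their full pole set inside $\omega$ and that for $\tilde x$ near $x_0$ the parameter $f''(\tilde x)$ lies in $U$ with the correct pole at $\tilde x$ captured; the dominance of $f''|_{Z_0}$ together with the choice of $z_0$ with $f(z_0)\neq f(x_0)$ is what permits this.
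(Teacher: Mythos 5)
Your proof is correct in spirit and takes a genuinely different route from the paper's, in the ``endgame.'' Both proofs hinge on Theorem~\ref{theo:typean}, but the paper applies it to the family
$\vph_N(x,y)=\log(|f'(x)-f'(y)|+|f''(x)-f''(y)|^N)$ with parameter $y$ ranging in $X'\Subset X$ and a second auxiliary parameter $N\in\Z_+$. Analyticity of $S(u,\vph_N,X')$ and the identification $S(\log|f'|,\vph_N,X')=Z'$ give $u\le\vph_N+C_N$ near $Z'$ for every $N$; the $f''$-contribution is then killed by passing to the Green--Zahariuta functions $g_N(x)=\max\{\log|f'(x)|,\,N\log|f''(x)-f''(a)|\}$ on a small ball $D$ and letting $N\to\infty$, where $g_N\nearrow\log|f'|$. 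You instead use a single weight $\varphi(x,y)=\log|(f'(x),f''(x)-y)|$ with $y$ in the \emph{target} of $f''$, upgrade ``nonempty open subset of $S_1$'' to $S_1=U$ by analyticity plus connectedness, and then exploit the diagonal substitution $(x,y)=(\tilde x,f''(\tilde x))$: because $\varphi_{f''(\tilde x)}(\tilde x)=\log|f'(\tilde x)|$ exactly, the convexity bound $u\le\varphi_y+C(y)$ specializes directly to $u(\tilde x)\le\log|f'(\tilde x)|+C(f''(\tilde x))$, with $C$ locally bounded above by Demailly's plurisubharmonicity of $y\mapsto\Lambda(u,\varphi_y,r_0)$. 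This is a nice observation: it removes the need for the auxiliary exponent $N$, the limit, and the Green--Zahariuta extremality argument entirely.

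The trade-off is that the geometric reduction is doing more work in your version and is not yet fully justified. In particular: (a) you need the ``thin tube'' $V$ around a path $\gamma\subset Z_0$ from $x_0$ to $z_0$ to satisfy that for $y$ near $f''(z_0)$ the \emph{entire} fiber $f^{-1}(0,y)\cap V$ lands in the $z_0$-end of $V$; this requires choosing $\gamma$ so that $f''\circ\gamma$ attains the value $f''(z_0)$ only at $t=1$ (possible since $f''|_{Z_0}$ is finite and $Z_0$ is positive-dimensional, but it must be said), and requires the tube to avoid other components of $Z_{f'}$ except possibly near $x_0$ where $f''\approx f''(x_0)\neq f''(z_0)$; (b) condition (iii) of Theorem~\ref{theo:typean} demands that the sublevel sets be relatively compact in $V\times U$, i.e.\ that the poles $f^{-1}(\{0\}\times U)\cap\overline V$ stay off $\partial V$ — with a tube around a path this needs an explicit arrangement at both ends; and (c) you need $\mathcal Z(y)\neq\emptyset$ for all $y\in U$, i.e.\ $\{0\}\times U\subset f(V)$, which again constrains $V$. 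You flag the construction as ``the principal obstacle,'' which is an honest assessment; it should be carried out, or at least the three points above addressed, before the argument is complete. (The paper's own proof glosses over a cousin of point (b) in its choice of $X'$, so this is not a defect unique to your approach — but since your argument relies on the geometry more heavily, it deserves more care.) Subject to that, the diagonal trick is a cleaner alternative to the paper's $N\to\infty$ limit.
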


\begin{proof} Let $\vph_N(x,y)=\log(|f'(x)-f'(y)|+
|f''(x)-f''(y)|^N)$, $N\in\Z_+$, and let $X'\Subset X$ be such that
$\omega'=X'\cap\omega$ intersects all irreducible components of the
set $Z'=\{x\in X': f'(x)=0\}$. Then, by Theorem~\ref{theo:typean},
$S(u,\vph,X')$ is an analytic variety. By the assumption,
$S(u,\vph,X')\cap\omega'\supset S(\log|f'|,\vph,X')\cap\omega'$.
Therefore, $S(u,\vph,X')$ contains all irreducible components of
$S(\log|f'|,\vph,X')$ that pass through $\omega$. Observe now that
$S(\log|f'|,\vph,X')= Z'$, which implies $u\le \vph_N+C$ on $Z'$.

Given $a\in Z'$, we can assume $D=\{x:\max\{|f'(x)|,
|f''(x)-f''(a)|\}<1\}\Subset X'$. Therefore, $u\le g_N+C$, where
$g_N(x)=\max\{\log|f'(x)|, N\log|f''(x)-f''(a)|\}$ is the
Green--Zahariuta function for the singularity $\vph_N$ in $D$.
Taking $N\to\infty$, we get $u\le\log|f'|+C$ in $D$.
\end{proof}
\medskip

A more accurate analysis allows us to weaken the assumptions on the
mapping $f'$ in Corollary~\ref{cor:finite}. To this end, it is
convenient to use the notion of complex spaces.

For a closed complex subspace $A$ of $X$, let
${\I}_A=({\I}_{A,x})_{x\in X}$ be the associated coherent sheaf of
ideals in the sheaf $\O_X$ of germs of holomorphic functions on $X$,
and let $|A|$ be the variety in $X$ locally defined as the common
set of zeros of holomorphic functions with germs in ${\I}_A$, i.e.,
$|A|=\{x:{\I}_{A,x}\neq \O_{X,x}\}$.

Recall that an ideal ${\mathcal J}\subset \I\subset \O_{X,x}$ is
called a {\it reduction} of $\I$ if its integral closure coincides
with that of $\I$; the {\it analytic spread} of $\I$ equals the
minimal number of generators of its reductions \cite{NR}.

We will say that a complex space $A$ is {\it integrally generic at}
$x\in |A|$ if the analytic spread of ${\I}_{A,x}$ equals
$\codim_x|A|$. This is equivalent to saying that there exist
functions $h_k\in{\I}_{A,x}$, $ k=1,\ldots, \codim_x|A|$, such that
$\log|h|=\log|f|+O(1)$, where $f=(f_1,\ldots,f_{s})$ are generators
of ${\I}_{A,x}$, see \cite{NR}. A space $A$ is {\it integrally
generic} if it is so at each $x\in|A|$.

We will write $u\le\log|\I_A|$ if a function $u$ satisfies
$u\le\log|f|+O(1)$ for local generators $f$ of $\I_A$.

\begin{theorem}\label{theo:integr} Let $A$ be an integrally generic complex
space on $X$ and $\omega$ be an open set intersecting every
irreducible component of $|A|$. If a function $u\in\PSH(X)$
satisfies $u\le \log|\I_A|$ on $\omega$, then it satisfies the
relation everywhere in $X$.
\end{theorem}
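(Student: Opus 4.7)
The statement is local at points of $|A|$: away from $|A|$ the generators of $\I_A$ are locally bounded below, so the bound $u \le \log|\I_A|+O(1)$ holds trivially. I plan to show
\[
V' := \{a \in |A| : u \le \log|\I_A|+O(1)\ \text{on a neighbourhood of}\ a\} = |A|
\]
by proving that for each global irreducible component $V_i$ of $|A|$ the set $V' \cap V_i$ is nonempty, open, and closed in $V_i$. Openness is immediate from the definition, and nonemptiness follows from $\omega \cap V_i \neq \emptyset$ combined with the hypothesis; the substance is closedness.

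Fix a point $a$ in the closure of $V' \cap V_i$ in $V_i$. Integral genericity at $a$ furnishes $h = (h_1,\ldots,h_k)$ with $h_j \in \I_{A,a}$, $k = \codim_a|A|$, and $\log|h| = \log|\I_A|+O(1)$ near $a$. Choose $h'' = (h''_1,\ldots,h''_{n-k})$ vanishing at $a$ so that, after shrinking, $f = (h,h'')\colon U \to \Cn$ is a finite holomorphic map on a neighbourhood $U$ of $a$; this is possible because $|A|$ has pure codimension $k$ at $a$, so a generic linear $h''$ restricts to a finite map on $|A|$. Apply Theorem \ref{theo:typean} on $U\times U$ to the family
\[
\vph_N(x,y) = \log\bigl(|h(x)-h(y)|+|h''(x)-h''(y)|^N\bigr), \qquad N \in \Z_+,
\]
whose properties (i)--(v) are verified as in the discussion preceding Corollary \ref{cor:finite}, to deduce that $S_N := S_1(u,\vph_N,U)$ is an analytic subset of $U$. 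For $y \in V' \cap V_i \cap U$ one has $h(y)=0$, hence $\log|h(x)| \le \vph_N(x,y)$, and the hypothesis $u \le \log|h|+O(1)$ near $y$ places $y$ in $S_N$, so $V' \cap V_i \cap U \subset S_N$. By pigeonhole, a sequence in $V' \cap V_i \cap U$ converging to $a$ lies eventually on a single local analytic branch $C$ of $V_i$ at $a$; then $V' \cap C$ is a nonempty open subset of the irreducible analytic set $C$ contained in the closed analytic set $S_N \cap C$, and irreducibility forces $S_N \cap C = C$, whence $a \in S_N$ for every $N$.

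To pass from ``$a \in S_N$ for all $N$'' to $a \in V'$ I follow the concluding step of the proof of Corollary \ref{cor:finite}: on the polydisc-like neighbourhood $D = \{|h|<1,\,|h''-h''(a)|<1\} \Subset U$, the Green--Zahariuta function for the singularity $\vph_N(\cdot,a)$ is $g_N = \max\{\log|h|,\,N\log|h''-h''(a)|\}$, so $a \in S_N$ yields $u \le g_N+C$ on $D$; since $g_N$ decreases pointwise on $D\setminus\{a\}$ to $\log|h|$ as $N \to \infty$, one gets $u \le \log|h|+C = \log|\I_A|+O(1)$ on $D$ and hence $a \in V'$. The principal obstacle is precisely the closedness step: the analyticity of $S_N$ propagates only in the Zariski sense along $|A|$, whereas $V'$ is defined by purely Euclidean-local data, and at a point $a$ where $V_i$ is locally reducible the condition need not be approached by $V'$ along every local branch; the pigeonhole plus local-branch irreducibility device above is the bridge.
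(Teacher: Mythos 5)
Your architecture is different from the paper's. The paper proves the theorem in two stages: first it propagates the bound along each set $Z_l^* = Z_l\setminus\bigcup_{k\neq l}Z_k$ (points lying on a single global irreducible component) by iterating Corollary~\ref{cor:finite} and using connectedness of $Z_l^*$; then, at the remaining points $|A|\setminus|A|^*$, it abandons the analyticity machinery entirely and instead uses a Thie-theorem slicing argument borrowed from \cite[Lemma~4.2]{RSig2}: one shows that on almost every slice $\B_{r_1}'\times\{x_0''\}$ the function $\log(|h(\cdot,x_0'')|/\delta)$ is maximal off the finite set $R(x_0'')$ and dominates $u$ by the already-established bound near $|A|^*$, and then one passes to all slices by continuity. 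You instead run a single open--closed argument on each $V_i$, applying Theorem~\ref{theo:typean} directly at every limit point $a$ and bridging local reducibility via pigeonhole over the local branches. If it worked, it would be cleaner and would make Corollary~\ref{cor:finite} unnecessary as an intermediate step.

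There is, however, a gap at the pivotal inclusion $V'\cap V_i\cap U\subset S_N$. Membership $y\in S_N=S_1(u,\vph_N,U)$ means $\sigma(u,\vph_N(\cdot,y))\ge 1$, i.e.\ $u(x)\le\vph_N(x,y)+O(1)$ as $x$ tends to the \emph{whole} finite set $\mathcal Z(y)=\{x\in U:\,h(x)=0,\ h''(x)=h''(y)\}=f^{-1}(f(y))\cap U$, since the relative type is a $\liminf$ over $\{\vph_N(\cdot,y)\to-\infty\}$. Knowing $y\in V'$ gives this bound only near $y$ itself. If $|A|$ is locally reducible at $a$, the fiber $\mathcal Z(y)$ will in general contain further points $z\neq y$ of $|A|\cap U$ (on other local branches through $a$) at which $u\le\log|\I_A|+O(1)$ is precisely what is still to be proved; if $u$ happens to be bounded near such a $z$, then $\sigma(u,\vph_N(\cdot,y))=0$ and $y\notin S_N$. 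Your pigeonhole device picks a single branch $C$ for $y$, but does nothing to control the other branches hit by $\mathcal Z(y)$, so the argument is circular exactly at the singular points it is meant to handle. This is precisely why the paper does \emph{not} push the Theorem~\ref{theo:typean} argument through at the bad points and switches to the slicing argument, whose input is only the one-sided inequality $\log|h|\le\log|\I_A|$ together with maximality on the good slices, and which is insensitive to what $u$ does on the other sheets of the fiber. To repair your proof you would either have to show that all of $\mathcal Z(y)\cap U$ stays in $V'$ for $y$ close to $a$ (not clear), or replace the closedness step at non-$|A|^*$ points by the paper's slicing argument, at which point the two proofs essentially coincide.

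A minor further point: you shrink $U$ so that $\mathcal Z(a)\cap U=\{a\}$ for the final Green--Zahariuta step, but this does not remove the difficulty above, because for $y\neq a$ close to $a$ the fiber $\mathcal Z(y)\cap U$ can still consist of several points clustering at $a$.
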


\begin{proof} Denote by $Z_l$, $l=1,2,\ldots$, the irreducible
components of $|A|$. We will first prove near all points of the set
$ Z_l^*=Z_l\setminus \cup_{k\neq l}Z_k.$

Let $\codim\, Z_l=p$. For an arbitrary point $z\in
Z_l^*\cap\partial\omega$, there is a neighbourhood $U$ of $z$, a
holomorphic mapping $h:U\to\C^p$, and a linear mapping
$U\to\C^{n-p}$, such that $|A|\cap U=Z_l^*\cap U$, $\log|h|\le
\log|\I_A|$, and for every $y\in V$, the mapping
$F_y:x\mapsto(h(x)-h(y),L(x)-L(y))$ is finite in $U$. By
Corollary~\ref{cor:finite}, we get then $u\le
\log|h|+O(1)\le\log|\I_A|$ on $\omega\cup U$.

Now we can repeat the procedure with $\omega\cup U$ instead of
$\omega$. Since the sets $Z_l^*$ are connected, it gives us the
desired bounded near every point of $|A|^*=\cup_l Z_l^*$.

The rest points can be treated as in the proof of
\cite[Lemma~4.2]{RSig2}. Namely, fix a point $z\in |A|\setminus
|A|^*$, $\codim_z|A|=p$. By Thie's theorem, there exist local
coordinates $x=(x',x'')$, $x'=(x_1,\ldots,x_p)$,
$x''=(x_{p+1},\ldots,x_n)$, centered at $z$, and balls $\B'\subset
{\C}^p$, $\B''\subset {\C}^{n-p}$ such that $\B'\times \B''\Subset
V$, $|A|\cap(\B'\times \B'')$ is contained in the cone $\{|x'|\le
\gamma |x''|\}$ with some constant $\gamma>0$, and the projection of
$|A|\cap(\B'\times \B'')$ onto $\B''$ is a ramified covering with a
finite number of sheets.

Let $h=(h_1,\ldots,h_p)$ satisfy $\log|h|\le\log|\I_A|$ on $V$. Take
$r_1=2\gamma r_2$ with a sufficiently small $r_2>0$ so that
$\B_{r_1}'\subset \B'$ and $\B_{r_2}''\subset \B''$, then for some
$\delta>0$ we have $|h|\ge\delta$ on $\partial \B_{r_1}'\times
\B_{r_2}''$.

Given a point $x_0''\subset \B_{r_2}''$, denote by $R(x_0'')$ and
$S(x_0'')$ the intersections of the set $\B_{r_1}'\times\{x_0''\}$
with the varieties $|A|$ and $|A|\setminus |A|^*$, respectively.
Since the projection is a ramified covering, $R(x_0'')$ is finite
for any $x_0''\in \B_{r_2}''$, while $S(x_0'')$ is empty for almost
all $x_0''\in \B_{r_2}''$ because $\dim S\le n-p-1$; we denote the
set of all such generic $x_0''$ by $E$.

Given  $x_0''\in E$, the function $v(x')=\log(|h(x',x_0'')|/\delta)$
is nonnegative on $\partial \B_{r_1}'$ and maximal on
$\B_{r_1}'\setminus R(x_0'')$, since the map
$h(\cdot,x_0''):\B_{r_1}'\to{\C}^p$ has no zeros outside $R(x_0'')$.
Since $u$ satisfies $u\le \log|h| + O(1)$ locally near points of
$|A|^*$, we have then $u(x',x_0'')\le v(x')+C$ on the whole ball
$\B_{r_1}'$, where $C=\sup_V u$.

As $x_0''\in E$ is arbitrary, this gives us $u\le \log|h|-\log\delta
+C$ on $\B_{r_1}'\times E$. The continuity of the function $\log|h|$
extends this relation to the whole set $\B_{r_1}'\times \B_{r_2}''$,
which completes the proof.
\end{proof}

\section{Green functions}
The result can be applied to investigation of maximality properties
for Green functions with analytic singularities.

The {\it Green function $G_A$ with singularities along} a complex
space $A$ is the upper envelope of the class of all functions $u\in
PSH^-(X)$ such that $u\leq \log|\I_A|$. This function is
plurisubharmonic in $X$ and satisfies $G_A\leq \log|\I_A|$, see
\cite{RSig2}.

When $|A|$ is discrete, $G_A$ is maximal on $X\setminus |A|$. In the
case $\dim|A|>0$, the Green function has additional maximality
properties. Namely, if $\I_A$ has $p<n$ global generators, then
$G_A$ is maximal on the whole $X$, and for an arbitrary complex
space $A$, the function $G_A$ is {\it locally maximal} outside a
discrete subset $J_A$ of $|A|$ consisting of all points $x\in|A|$
such that the analytic spread of ${\I}_{A,x}$ equals $n$
\cite[Theorem~4.3]{RSig2}; in \cite{R}, $J_A$ was called the {\it
complete indeterminacy locus}. (A function $v$ is said to be locally
maximal on an open set $\omega$ if every point of $\omega$ has a
neighbourhood where $v$ is maximal.)

We do not know if the function $G_A$ is always maximal on
$X\setminus J_A$; what we can prove is the following result.

\begin{corollary}\label{cor:gr} If $A$ is an arbitrary closed
complex space on $X$, then the function $G_A$ is maximal outside an
analytic subset $J$ of $|A|$, nowhere dense in each
positive-dimensional component of $|A|$. If $\dim X=2$, then $J$
coincides with the complete indeterminacy locus $J_A$. If $A$ is
integrally generic, then $J=\emptyset$.
\end{corollary}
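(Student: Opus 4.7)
The plan is to derive global maximality of $G_A$ on $X\setminus J$ from Theorem~\ref{theo:integr} combined with the envelope definition of $G_A$, taking $J$ to be the analytic locus in $|A|$ where integral genericity fails. In the integrally generic case, fix $U\Subset X\setminus J$ and $v\in\PSH(X)$ with $v\le G_A$ on $X\setminus U$; the goal is $v\le G_A$ throughout $X$. Since $G_A\le 0$ on $X$, we have $v\le 0$ on $\partial U$, and the maximum principle for plurisubharmonic functions yields $v\le 0$ on $U$, so $v\in\PSH^-(X)$. On the open set $\omega:=X\setminus\overline U$ we have $v\le G_A\le\log|\I_A|$; since every positive-dimensional irreducible analytic subset of the Stein manifold $X$ is non-compact, $\omega$ meets every positive-dimensional component of $|A|$, and the zero-dimensional components are controlled by the definition of $J$. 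Theorem~\ref{theo:integr} then upgrades the bound to $v\le\log|\I_A|$ throughout $X$. But $v$ is now a non-positive plurisubharmonic function with $v\le\log|\I_A|$, hence a competitor in the envelope defining $G_A$, so $v\le G_A$ on $X$, as desired.

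For an arbitrary closed complex space $A$, I would take $J$ to be the union of $J_A$ and the subset of $|A|$ where $\I_A$ fails to be integrally generic, i.e.\ where $\mathrm{spread}(\I_{A,x})>\codim_x|A|$. Both pieces are analytic: $J_A$ is discrete, and the non-integrally-generic locus is cut out by the constructible condition that $\I_A$ admit a reduction generated by $\codim_x|A|$ elements. Since the generic point of each irreducible component of $|A|$ is integrally generic, $J$ is nowhere dense in each positive-dimensional component. Maximality of $G_A$ on $X\setminus J$ then follows from the integrally generic argument after passing locally near $|A|\setminus J$ to a reduction $h=(h_1,\dots,h_p)$ of length $p=\codim_x|A|$ satisfying $\log|h|=\log|\I_A|+O(1)$. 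For $\dim X=2$, the locus $J_A$ is automatically discrete, and at any $x\in|A|\setminus J_A$ the spread of $\I_{A,x}$ is strictly less than $n=2$ while $\codim_x|A|\ge 1$, which forces spread $=1=\codim$; so $A$ is integrally generic off $J_A$ and $J=J_A$.

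The main obstacle is the compatibility between Theorem~\ref{theo:integr} and the notion of global maximality: Theorem~\ref{theo:integr} needs the initial inequality to hold on an open set intersecting every irreducible component of $|A|$. Positive-dimensional components are handled automatically by Stein-ness of $X$, but isolated points of $|A|$ inside $\overline U$ are not, which is the reason the statement absorbs (at least) $J_A$ into $J$. Once this is accounted for, the passage from the pointwise bound $v\le\log|\I_A|$ to the global comparison with $G_A$ is remarkably clean, consisting only of the preliminary maximum-principle reduction of $v$ to $\PSH^-(X)$ followed by direct application of the envelope definition of $G_A$.
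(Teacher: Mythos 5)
Your overall strategy matches the paper's: take $J$ to be the locus where the analytic spread of $\I_{A,x}$ exceeds $\codim_x|A|$, use the maximum principle to reduce a competitor $v$ to $\PSH^-(X)$, propagate $v\le\log|\I_A|$ from $\omega=X\setminus\overline U$ across $|A|$ via Theorem~\ref{theo:integr}, and invoke the envelope definition of $G_A$. The paper's own proof is very terse --- it simply cites \cite[Proposition~3.5]{RSig2} and says ``in view of Theorem~\ref{theo:integr}, this implies the claims'' --- so you have filled in exactly the envelope and maximum-principle bookkeeping that the paper leaves implicit, which is useful.

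The genuine gap is your justification that $J$ is analytic. You write that the non-integrally-generic locus is ``cut out by the constructible condition that $\I_A$ admit a reduction generated by $\codim_x|A|$ elements''; constructibility alone does not yield analyticity, and this is not a formality here. The paper obtains analyticity by citing \cite[Proposition~3.5]{RSig2}: the stratification $|A|=\bigsqcup_k J^k$ into local analytic varieties with $\codim J^k\ge k$ and analytic spread $\le k$ on $J^k$. From this the non-integrally-generic locus (those $a$ with $\mathrm{spread}(\I_{A,a})>\codim_a|A|$) is seen to be contained in strata whose codimension exceeds the local codimension of $|A|$, giving both analyticity and nowhere-density in positive-dimensional components. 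Without this (or an equivalent) input, your claim that ``both pieces are analytic'' is unsupported, and since the analyticity of $J$ is precisely what makes the statement non-trivial, this is the step you need to complete. A secondary point worth flagging: Theorem~\ref{theo:integr} as stated requires $A$ integrally generic globally; you invoke it for general $A$ by ``passing locally near $|A|\setminus J$ to a reduction,'' but this is a re-run of the theorem's proof rather than an application of its statement, and you should say so --- the propagation in the second half of that proof (the Thie/ramified-covering argument) must be re-examined to see that it does not require information inside $J$ other than the bound $v\le G_A\le\log|\I_A|$ already available there because $J\subset X\setminus\overline U$.
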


\begin{proof} By \cite[Proposition~3.5]{RSig2}, the set $|A|$ can
be decomposed into the disjoint union of local (not necessarily
closed) analytic varieties $J^k$, $1\le k\le n$, such that
$\codim\,J^k\ge k$ and for each $a\in J^k$, the ideal $\I_{A,a}$ has
analytic spread at most $k$. In view of Theorem~\ref{theo:integr},
this implies the claims.
\end{proof}

\bibliographystyle{amsalpha}

\vskip1cm

Tek/Nat, University of Stavanger, 4036 Stavanger, Norway

\vskip0.1cm

{\sc E-mail}: alexander.rashkovskii@uis.no

\end{document}